\newtheorem{theorem}{Theorem}[section]
\newtheorem{lemma}[theorem]{Lemma}
\theoremstyle{definition}
\theoremstyle{remark}
\newtheorem{remark}[theorem]{Remark}
\numberwithin{equation}{section}
\begin{document}

\title{2-Local   derivations
and automorphisms on $B(H)$}


\author{Sh. A. Ayupov}
\address{Institute of
 Mathematics and Information  Technologies,
 Uzbekistan Academy of Sciences,
 100125  Tashkent,   Uzbekistan
 and
 the Abdus Salam International Centre
 for Theoretical Physics (ICTP),
  Trieste, Italy}
\email{sh$_{-}$ayupov@mail.ru}
\thanks{The authors would like to acknowledge
the hospitality of the "Institut f\"{u}r Angewandte Mathematik",
Universit\"{a}t Bonn (Germany). This work is supported in part by
the DFG AL 214/36-1 project (Germany). }

\author{K. K. Kudaybergenov}
\address{Department of Mathematics, Karakalpak state university\\
Ch. Abdirov 1,  230113, Nukus,    Uzbekistan}
\email{karim2006@mail.ru}
\thanks{The second author would also like to acknowledge
 the support of the German Academic Exchange Service -- DAAD}

\subjclass[2000]{Primary 46L57; Secondary 46L40}



\keywords{derivation,  automorphism, 2-local derivation,
2-local automorphism}

\begin{abstract}
The paper is devoted to $2$-local derivations and $2$-local automorphisms on
 the  algebra $B(H)$ of all bounded linear  operators on a
 Hilbert space $H.$ We prove that every  $2$-local derivation on
$B(H)$ is a derivation.
A similar result is obtained for automorphisms.
\end{abstract}

\maketitle
\section{Introduction}

Given an algebra $\mathcal{A},$ a linear operator $D:\mathcal{A}\rightarrow \mathcal{A}$ is
called a \textit{derivation}, if $D(xy)=D(x)y+xD(y)$ for all $x,
y\in \mathcal{A}$ (the Leibniz rule). Each element $a\in \mathcal{A}$ implements a
derivation $D_a$ on $\mathcal{A}$ defined as $D_a(x)=[a, x]=ax-xa,$ $x\in \mathcal{A}.$ Such
derivations $D_a$ are said to be \textit{inner derivations}. If
the element $a,$ implementing the derivation $D_a,$ belongs to a
larger algebra $\mathcal{B}$ containing $\mathcal{A},$ then $D_a$ is called \textit{a
spatial derivation} on $\mathcal{A}.$

There exist various types of linear operators which are close to derivations
\cite{Kad,  Kim,  Sem1}. In particular
R.~Kadison \cite{Kad} has introduced and
investigated so-called local derivations on von Neumann algebras and some polynomial algebras.

A linear operator $\Delta$ on an algebra $\mathcal{A}$ is called a
\textit{local derivation} if given any $x\in \mathcal{A}$ there exists a
derivation $D$ (depending on $x$) such that $\Delta(x)=D(x).$  The
main problems concerning this notion are to find conditions under
which local derivations become derivations and to present examples of algebras
with local derivations that are not derivations \cite{Kad}. In particular
Kadison \cite{Kad} has proved that each
continuous local derivation from a von Neumann algebra
$M$ into a dual $M$-bimodule is a derivation.

In 1997, P. Semrl \cite{Sem1}  introduced the concepts of
$2$-local derivations and $2$-local automorphisms.
A  map $\Delta:\mathcal{A}\rightarrow\mathcal{A}$  (not linear in general) is called a
 $2$-\emph{local derivation} if  for every $x, y\in \mathcal{A},$  there exists
 a derivation $D_{x, y}:\mathcal{A}\rightarrow\mathcal{A}$
such that $\Delta(x)=D_{x, y}(x)$  and $\Delta(y)=D_{x, y}(y).$
A  map $\Theta:\mathcal{A}\rightarrow\mathcal{A}$  (not linear in general) is called a
 $2$-\emph{local automorphism}
 if  for every $x, y\in \mathcal{A},$  there exists
 an automorphism  $\Phi_{x, y}:\mathcal{A}\rightarrow\mathcal{A}$
such that $\Theta(x)=\Phi_{x, y}(x)$  and $\Theta(y)=\Phi_{x, y}(y).$
Local and $2$-local maps have been studied on different operator algebras by many
authors
\cite{Nur,   Bre1, AKNA, Liu,  Kad, Kim, Lar, Lin, Mol, Sem1, Sem2}.

In \cite{Sem1}, P. Semrl described
$2$-local derivations and automorphisms on the algebra $B(H)$ of
 all bounded linear operators on the infinite-dimensional
separable Hilbert space $H.$  A similar
description for the finite-dimensional case appeared later in \cite{Kim},
\cite{Mol}. In the paper \cite{Lin}
$2$-local derivations and automorphisms have been described on
matrix algebras over finite-dimensional division rings.

In the present paper we suggest a new technique and generalize
the above mentioned results of \cite{Sem1} and \cite{Kim} for arbitrary
Hilbert spaces. Namely we consider
$2$-local derivations and $2$-local automorphisms on  the algebra
$B(H)$ of all linear bounded operators on an
arbitrary  (no separability  is assumed)
Hilbert space $H.$ We prove that every $2$-local derivation on $B(H)$
 is a  derivation.
A similar result is obtained for automorphisms, strengthening
 a result of\cite{Liu} in the case of Hilbert spaces. 

\section{Main results}

Let $H$ be an arbitrary Hilbert space, and let
$B(H)$ be the algebra of all linear bounded operators on $H.$
Denote by $\mathcal{F}(H)$  the ideal of all finite-dimensional operators
from $B(H)$ and by $\mbox{tr}$ the canonical trace on $B(H).$

Note that the algebra $\mathcal{F}(H)$
 is semi-prime, i.e. if  $a\in \mathcal{F}(H)$ and
$a \mathcal{F}(H) a=\{0\}$
then $a=0.$ Indeed, let $a\in \mathcal{F}(H)$
 and $a \mathcal{F}(H) a=\{0\},$
i.e. $axa=0$ for all $x\in \mathcal{F}(H).$ In particular for
$x=a^{\ast}$ we have $aa^{\ast}a=0$ and hence $a^{\ast}aa^{\ast}a=0,$ i.e.
$|a|^4=0.$ Therefore $a=0.$

Further any derivation $D$ on $B(H)$ maps the ideal
$\mathcal{F}(H)$ into itself. Indeed, for any
$x\in \mathcal{F}(H)$ there exists a projection $p\in \mathcal{F}(H)$
such that $x=xp.$ Then
$$
D(x)=D(xp)=D(x)p+xD(p),
$$
and therefore $D(x)\in \mathcal{F}(H).$
Hence any $2$-local derivation on $B(H)$ also maps
$\mathcal{F}(H)$ into itself.
Similarly every automorphism on $B(H)$
also  maps $\mathcal{F}(H)$ into itself.

\begin{lemma}\label{A} Let
 $b\in B(H)$ be an arbitrary element.
If  $\mbox{tr}(x b)=0$ for all $x\in \mathcal{F}(H)$
then  $b=0.$
\end{lemma}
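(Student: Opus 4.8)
The plan is to test the identity $\mbox{tr}(xb)=0$ against rank-one operators, which already lie in $\mathcal{F}(H)$ and are enough to detect any nonzero $b$. So first I would fix notation: for vectors $\xi,\eta\in H$ let $e_{\xi,\eta}\in\mathcal{F}(H)$ be the rank-one operator defined by $e_{\xi,\eta}(\zeta)=\langle\zeta,\eta\rangle\,\xi$ for $\zeta\in H$. Two elementary facts are needed. (i) For any orthonormal basis $\{u_i\}$ of $H$ one has $\mbox{tr}(e_{\xi,\eta})=\sum_i\langle e_{\xi,\eta}u_i,u_i\rangle=\sum_i\langle u_i,\eta\rangle\langle\xi,u_i\rangle=\langle\xi,\eta\rangle$, a finite and basis-independent quantity. (ii) Composing with $b$ again produces a rank-one operator: $e_{\xi,\eta}\,b$ sends $\zeta\mapsto\langle b\zeta,\eta\rangle\,\xi=\langle\zeta,b^{\ast}\eta\rangle\,\xi$, i.e. $e_{\xi,\eta}\,b=e_{\xi,\,b^{\ast}\eta}$.

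Combining (i) and (ii) and applying the hypothesis with $x=e_{\xi,\eta}$, I obtain $0=\mbox{tr}(e_{\xi,\eta}\,b)=\mbox{tr}(e_{\xi,\,b^{\ast}\eta})=\langle\xi,b^{\ast}\eta\rangle=\langle b\xi,\eta\rangle$ for all $\xi,\eta\in H$. Fixing $\xi$ and choosing $\eta=b\xi$ gives $\|b\xi\|^{2}=0$, hence $b\xi=0$; since $\xi$ was arbitrary, $b=0$.

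I do not expect a genuine obstacle here: the statement reduces to a one-line computation once rank-one operators are brought in. The only points requiring (routine) verification are that $e_{\xi,\eta}$ indeed belongs to $\mathcal{F}(H)$ and that the trace formula in (i) is legitimate; alternatively one may replace (ii) by the cyclicity property $\mbox{tr}(xb)=\mbox{tr}(bx)$ valid whenever $x$ has finite rank, which is equally standard and leads to the same conclusion.
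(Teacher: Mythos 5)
Your proof is correct, and it takes a genuinely more direct route than the paper's. The paper tests the hypothesis against operators of the form $x=eb^{\ast}$ for finite-dimensional projections $e$, obtains $\mbox{tr}\left((be)^{\ast}(be)\right)=0$, invokes the faithfulness of the canonical trace to conclude $be=0$, and then needs a final limiting argument: an increasing net $e_F\uparrow\mathbf{1}$ of finite-rank projections together with $be_Fb^{\ast}\uparrow bb^{\ast}$ to get $bb^{\ast}=0$. You instead test against rank-one operators $e_{\xi,\eta}$ and read off all matrix coefficients $\langle b\xi,\eta\rangle=0$ directly, so $b=0$ follows at once with no appeal to faithfulness of the trace and no net-convergence step. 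Your computations check out: $e_{\xi,\eta}b=e_{\xi,b^{\ast}\eta}$ and $\mbox{tr}(e_{\xi,\eta})=\langle\xi,\eta\rangle$ (the sum over an arbitrary orthonormal basis has only countably many nonzero terms and is basis-independent, as you note). What your approach buys is brevity and self-containedness; what the paper's buys is a template that reappears later in the article, where the same ``multiply by a projection, take the trace, pass to the limit'' pattern is used in Lemma 2.2 and Theorem 2.3, so the authors' choice is stylistically consistent even if locally less economical.
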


\begin{proof} Let  $b\in B(H).$ For any
finite-dimensional projection  $e\in B(H)$ we have $e b^\ast\in
\mathcal{F}(H)$ and therefore
by the assumption of the lemma it follows that
$\mbox{tr}(e b^\ast b)=0.$
Thus
$$
0=\mbox{tr}(e b^\ast b)=\mbox{tr}(e^2 b^\ast b)=
\mbox{tr}(e b^\ast b e)=\mbox{tr}((be)^\ast(be)),
$$
i.e.
$$
\mbox{tr}((be)^\ast(be))=0.
$$
Since the trace $\mbox{tr}$ is faithful, we obtain $(be)^\ast(be)=0,$ i.e. $be=0.$

Now  take a family  of mutually orthogonal  one-dimensional  projections
$\{e_\alpha\}_{\alpha\in J}$ in
 $B(H)$ such that $\bigvee\limits_{\alpha\in J}e_\alpha=\mathbf{1}.$
Given a finite subset $F\subset J$ put $e_F=\sum\limits_{\alpha\in F}e_\alpha.$
We obtain an increasing net $\{e_F\}$, when $F$ runs over all finite subsets of $J$.
Since $e_F\uparrow \mathbf{1}$
we have that
$$
0=be_F b^\ast\uparrow b b^\ast,
$$
i.e. $bb^\ast=0.$ Thus $b=0.$
The proof is complete.
\end{proof}

\begin{lemma}\label{B}
If   $\Delta: B(H)\rightarrow B(H)$ is a $2$-local derivation such that
 $\Delta|_{\mathcal{F}(H)}\equiv 0,$ then $\Delta\equiv 0.$
 \end{lemma}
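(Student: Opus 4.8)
The plan is to show that $\Delta$ vanishes on an arbitrary $b\in B(H)$ by combining the hypothesis $\Delta|_{\mathcal{F}(H)}\equiv 0$ with the defining property of a $2$-local derivation, taking the second "test element" $y$ to range over $\mathcal{F}(H)$. Fix $b\in B(H)$ and an arbitrary $x\in\mathcal{F}(H)$. By definition there is a derivation $D_{b,x}$ on $B(H)$ with $\Delta(b)=D_{b,x}(b)$ and $\Delta(x)=D_{b,x}(x)$. Since $x\in\mathcal{F}(H)$ we have $\Delta(x)=0$, hence $D_{b,x}(x)=0$ for every $x\in\mathcal{F}(H)$; that is, the derivation $D_{b,x}$ annihilates the whole ideal $\mathcal{F}(H)$. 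The first step is therefore to understand derivations of $B(H)$ that kill $\mathcal{F}(H)$.

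The second step is to extract information about $D_{b,x}(b)$ from the fact that $D_{b,x}|_{\mathcal{F}(H)}\equiv 0$. Every derivation of $B(H)$ is inner, say $D_{b,x}=D_{a}$ for some $a=a(b,x)\in B(H)$; but more usefully, for any $y\in\mathcal{F}(H)$ and any $z\in B(H)$, applying the Leibniz rule to $yz\in\mathcal{F}(H)$ and $zy\in\mathcal{F}(H)$ gives $0=D_{b,x}(yz)=D_{b,x}(y)z+yD_{b,x}(z)=yD_{b,x}(z)$ and similarly $D_{b,x}(z)y=0$. Thus $y\,D_{b,x}(b)=D_{b,x}(b)\,y=0$ for all $y\in\mathcal{F}(H)$. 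In particular $\Delta(b)=D_{b,x}(b)$ commutes with and is annihilated by every finite-dimensional operator; writing out $e\,\Delta(b)=\Delta(b)\,e=0$ for every one-dimensional projection $e$ and using the trace/faithfulness argument exactly as in Lemma~\ref{A} (or simply noting $\mathcal{F}(H)$ is an essential ideal, so its left and right annihilators in $B(H)$ are zero) forces $\Delta(b)=0$.

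The main technical point — and the only place care is needed — is the passage $D_{b,x}|_{\mathcal{F}(H)}\equiv0\Rightarrow y\,D_{b,x}(b)=0$. One must be slightly careful that the single test element $x$ is not enough: the definition only gives $D_{b,x}(x)=0$ for the one chosen $x$, not for all of $\mathcal{F}(H)$ simultaneously. So the cleaner route is: for the fixed $b$, and for \emph{each} $y\in\mathcal{F}(H)$ separately, choose a derivation $D_{b,y}$ with $\Delta(b)=D_{b,y}(b)$ and $0=\Delta(y)=D_{b,y}(y)$; then derive $y\,\Delta(b)=y\,D_{b,y}(b)=-D_{b,y}(y)\cdot(\text{something})$ via a factorization $y=ye$ with $e$ a finite projection, giving $D_{b,y}(y)=D_{b,y}(y)e+yD_{b,y}(e)$ and hence $y\,D_{b,y}(e)=-D_{b,y}(y)e=0$, which upon varying $e$ yields $y\,\Delta(b)=0$; symmetrically $\Delta(b)\,y=0$. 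Running $y$ over a net of finite-rank projections increasing to $\mathbf 1$ then gives $\Delta(b)=0$. Since $b$ was arbitrary, $\Delta\equiv0$.
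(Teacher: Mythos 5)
There is a genuine gap at the heart of your argument: the step that is supposed to produce $y\,\Delta(b)=0$ does not follow. You correctly observe that the pair $(b,x)$ only gives $D_{b,x}(x)=0$ for the single chosen $x$, so $D_{b,x}$ need not annihilate all of $\mathcal{F}(H)$; hence your first paragraph, which applies $D_{b,x}$ to arbitrary elements $yz\in\mathcal{F}(H)$ and asserts the result is $0$, is unjustified. But the proposed repair does not close the gap. For fixed $y\in\mathcal{F}(H)$ the derivation $D_{b,y}$ satisfies only $D_{b,y}(b)=\Delta(b)$ and $D_{b,y}(y)=0$; from the factorization $y=ye$ you deduce $y\,D_{b,y}(e)=0$, which is a statement about $D_{b,y}(e)$ --- a quantity with no relation to $\Delta(e)$ or to $D_{b,y}(b)$ --- and ``varying $e$'' cannot turn it into a statement about $D_{b,y}(b)=\Delta(b)$. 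Concretely, writing $D_{b,y}=[a,\cdot]$, the available information is $[a,y]=0$ and $[a,b]=\Delta(b)$, and from $ay=ya$ one cannot conclude $y(ab-ba)=0$ (take $y$ a rank-one projection and $a$ diagonal with distinct entries to see the implication fails for general $a,b$). So the key intermediate claim $y\,\Delta(b)=0$ for all $y\in\mathcal{F}(H)$ is never established.

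The paper closes exactly this gap with the trace. For $x\in B(H)$ and $y\in\mathcal{F}(H)$ one applies the (inner) derivation $D_{x,y}=[a,\cdot]$ to the product $xy$, which lies in the ideal $\mathcal{F}(H)$: the Leibniz rule gives $[a,xy]=\Delta(x)y+x\Delta(y)=\Delta(x)y$, and since $xy$ is finite rank the trace of the commutator $[a,xy]$ vanishes, whence $\mbox{tr}(\Delta(x)y)=0$ for all $y\in\mathcal{F}(H)$; Lemma~\ref{A} then yields $\Delta(x)=0$. The moral is that the two-point interpolation property is too weak to give the pointwise annihilation $y\,\Delta(b)=0$ directly; one needs a functional (here the trace) that kills commutators against finite-rank operators in order to convert the local information into global vanishing. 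If you want to salvage your outline, replace the annihilator step by this trace computation; your final step (a net of finite-rank projections increasing to $\mathbf{1}$, or Lemma~\ref{A}) is fine as written.
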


\begin{proof}
Let $\Delta: B(H)\rightarrow B(H)$ be a $2$-local derivation such that
 $\Delta|_{\mathcal{F}(H)}\equiv 0.$
For arbitrary $x\in B(H)$ and $y\in \mathcal{F}(H)$
there exists a derivation $D_{x, y}$ on $B(H)$
 such that
$\Delta(x)=D_{x, y}(x)$  and $\Delta(y)=D_{x, y}(y).$
By \cite[Corolarry  3.4]{Che} there exists element $a\in B(H)$
such that
$$
[a, xy]=D_{x, y}(xy)=D_{x, y}(x)y+xD_{x, y}(y)=\Delta(x)y+x\Delta(y),
$$
i.e.
$$
[a, xy]=\Delta(x)y+x\Delta(y).
$$
Since $y\in \mathcal{F}(H)$ we have $\Delta(y)=0,$ and therefore
$
[a, xy]=\Delta(x)y.
$
Since  the trace $\mbox{tr}$ accepts finite values on
$\mathcal{F}(H)$ and  $\mathcal{F}(H)$ is an ideal in
$B(H)$  we have
$$
\mbox{tr}(axy) = \mbox{tr}((ax)y) =\mbox{tr}(y(ax)) =
\mbox{tr}((ya)x) = \mbox{tr}(x(ya))= \mbox{tr}(xya).
$$
Thus
$$
0 = \mbox{tr}(axy-xya)=\mbox{tr}([a, xy])=\mbox{tr}\left(\Delta(x)y\right),
$$
i.e.
$\mbox{tr}(\Delta(x)y)=0$ for all $y\in \mathcal{F}(H).$ By Lemma \ref{A}
we have that $\Delta(x)=0.$
The proof is complete. \end{proof}

The following theorem is the main result of this paper.

\begin{theorem}\label{Main}
Let $H$ be an arbitrary Hilbert space, and let
$B(H)$ be the algebra of all bounded linear  operators on $H.$
 Then every $2$-local
derivation $\Delta: B(H)\rightarrow B(H)$
 is a
derivation.
\end{theorem}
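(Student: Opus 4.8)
The plan is to reduce the problem to the ideal $\mathcal{F}(H)$ of finite-rank operators, where one already knows (from work on $2$-local derivations on matrix algebras and simple operator algebras, e.g. \cite{Lin, Kim}) or can establish directly that $2$-local derivations are additive and hence derivations. Concretely, I would first show that the restriction $\Delta|_{\mathcal{F}(H)}$ is a derivation of $\mathcal{F}(H)$: since $\Delta$ maps $\mathcal{F}(H)$ into itself (as noted in the excerpt), and every derivation of $B(H)$ is inner (implemented by some $a\in B(H)$, by \cite[Corollary 3.4]{Che}), for any two $x,y\in\mathcal{F}(H)$ there is $a_{x,y}\in B(H)$ with $\Delta(x)=[a_{x,y},x]$ and $\Delta(y)=[a_{x,y},y]$. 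The goal is to produce a single $a\in B(H)$ with $\Delta(x)=[a,x]$ for all $x\in\mathcal{F}(H)$. One builds $a$ compartment-by-compartment: fix an increasing net of finite-rank projections $p_\beta\uparrow\mathbf{1}$; on each corner $p_\beta B(H)p_\beta\cong M_{n_\beta}(\mathbb{C})$ one invokes the known finite-dimensional result to get that $\Delta$ restricted there is a derivation, i.e. of the form $[a_\beta,\cdot]$, and then patches these together using the $2$-local hypothesis applied to pairs of matrix units to force compatibility, obtaining an operator $a$ (defined up to a scalar, which can be normalized) with $\Delta=[a,\cdot]$ on $\mathcal{F}(H)$.

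Once $\Delta|_{\mathcal{F}(H)} = [a,\cdot]$ for a fixed $a\in B(H)$, I would consider the map $\Delta' = \Delta - D_a$, where $D_a(x)=[a,x]$ is the inner derivation on all of $B(H)$. A crucial point is that the difference of a $2$-local derivation and a genuine derivation is again a $2$-local derivation: given $x,y$, take the derivation $D_{x,y}$ with $\Delta(x)=D_{x,y}(x)$, $\Delta(y)=D_{x,y}(y)$, and then $D_{x,y}-D_a$ is a derivation agreeing with $\Delta'$ at $x$ and $y$. Hence $\Delta'$ is a $2$-local derivation on $B(H)$ with $\Delta'|_{\mathcal{F}(H)}\equiv 0$. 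By Lemma \ref{B}, $\Delta'\equiv 0$, so $\Delta = D_a$ is an (inner) derivation on all of $B(H)$, which is exactly what we want.

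The main obstacle will be the first step: showing that a $2$-local derivation on $\mathcal{F}(H)$ is genuinely a derivation, uniformly across the whole ideal rather than just on each finite corner. The finite-dimensional input gives a derivation on each $p_\beta B(H)p_\beta$, but one must verify these local solutions glue consistently — the $2$-local condition applied to suitable pairs (for instance a matrix unit in one corner and one in a larger corner) is what enforces this, and handling it carefully, together with controlling the scalar ambiguity in each inner implementer, is the technical heart. A subsidiary point to check is boundedness: a priori the glued $a$ need only be a linear map on the dense subspace spanned by the $p_\beta H$, and one must argue it extends to a bounded operator; this follows because $\Delta(x)=D_{x,x}(x)=[a_{x,x},x]$ has norm controlled in a way that, via the uniform structure of the corners and a closed-graph or direct estimate, pins $a$ down in $B(H)$. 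Everything after obtaining $a$ is the short, clean argument via Lemma \ref{B} sketched above.
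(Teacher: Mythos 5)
Your endgame is exactly the paper's: once a single $a\in B(H)$ with $\Delta(x)=[a,x]$ for all $x\in\mathcal{F}(H)$ is in hand, the observation that $\Delta-D_a$ is again a $2$-local derivation vanishing on $\mathcal{F}(H)$, combined with Lemma~\ref{B}, finishes the proof. The gap is in the first step, which is where essentially all the difficulty of the theorem lives. Your plan to invoke the finite-dimensional result on each corner $p_\beta B(H)p_\beta$ does not apply as stated: $\Delta$ maps $\mathcal{F}(H)$ into itself, but it does not map $p_\beta B(H)p_\beta$ into itself --- for $x=p_\beta xp_\beta$ one only knows $\Delta(x)=[a_{x,y},x]$ with $a_{x,y}\in B(H)$ arbitrary, so $\Delta(x)$ has nonzero $(\mathbf{1}-p_\beta)(\cdot)p_\beta$ and $p_\beta(\cdot)(\mathbf{1}-p_\beta)$ components in general. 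Hence the corner restriction is not a $2$-local derivation of $M_{n_\beta}(\mathbb{C})$; you would first have to compress, then separately pin down the off-corner components, then glue the implementing elements $a_\beta$ across an (in general uncountable) net while controlling the additive ambiguity in each, and finally prove that the glued operator is bounded. None of these steps is carried out, and they are precisely the obstructions that made \v{S}emrl's original argument require separability. As written, the proposal defers the actual content of the theorem to what you call ``the technical heart.''

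The paper avoids the gluing altogether with a short trace argument, and this is the piece your proposal is missing. Applying $2$-locality to $x,y\in\mathcal{F}(H)$ and the innerness of derivations of $B(H)$ gives $[a,xy]=\Delta(x)y+x\Delta(y)$; since the trace kills commutators when one factor lies in $\mathcal{F}(H)$, this yields $\mbox{tr}(\Delta(x)y)=-\mbox{tr}(x\Delta(y))$, from which additivity of $\Delta$ on $\mathcal{F}(H)$ follows by faithfulness of the trace (via Lemma~\ref{A}). Homogeneity is immediate from $2$-locality at $(x,\lambda x)$, and $2$-locality at $(x,x^2)$ gives $\Delta(x^2)=\Delta(x)x+x\Delta(x)$, so $\Delta|_{\mathcal{F}(H)}$ is a linear Jordan derivation on the semiprime algebra $\mathcal{F}(H)$, hence a derivation by Bre\v{s}ar's theorem, and spatial with a bounded implementing $a\in B(H)$ by Chernoff's theorem. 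To repair your argument, either substitute this trace identity for the patching step or supply full proofs of the compression, gluing, and boundedness claims.
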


\begin{proof} Let $\Delta: B(H)\rightarrow B(H)$ be
 a $2$-local derivation.
For each $x, y\in \mathcal{F}(H)$ there exists a derivation $D_{x, y}$ on $B(H)$
 such that
$\Delta(x)=D_{x, y}(x)$  and $\Delta(y)=D_{x, y}(y).$
By \cite[Corolarry  3.4]{Che} there exists an  element $a\in B(H)$
such that
$$
[a, xy]=D_{x, y}(xy)=D_{x, y}(x)y+xD_{x, y}(y)=\Delta(x)y+x\Delta(y),
$$
i.e.
$$
[a, xy]=\Delta(x)y+x\Delta(y).
$$
Similarly as in Lemma \ref{B}  we have
$$
0 = \mbox{tr}(axy-xya)=
\mbox{tr}([a, xy])=\mbox{tr}\left(\Delta(x)y+x\Delta(y)\right),
$$
i.e.
$\mbox{tr}(\Delta(x)y)=-\mbox{tr}(x \Delta(y)).$
For arbitrary $u, v, w\in \mathcal{F}(H),$
 set $x=u+v,$ $y=w.$ Then from above we obtain
$$
\mbox{tr}(\Delta(u+v)w)=-\mbox{tr}((u+v)\Delta(w))=
$$
$$
=-\mbox{tr}(u\Delta(w))- \mbox{tr}(v\Delta(w))=
\mbox{tr}(\Delta(u)w)+\mbox{tr}(\Delta(v)w)=\mbox{tr}((\Delta(u)+\Delta(v))w),
$$
and so
$$
\mbox{tr}((\Delta(u+v)-\Delta(u)-\Delta(v))w)=0
$$ for all $u, v, w\in \mathcal{F}(H).$
Denote
$
b=\Delta(u+v)-\Delta(u)-\Delta(v)
$ and put
$
w=b^\ast.$ Then
$\mbox{tr}(bb^\ast)=0.$ Since the trace $\mbox{tr}$ is faithful it folows that  $bb^\ast=0,$ i.e. $b=0.$
Therefore
$$
\Delta(u+v)=\Delta(u)+\Delta(v),
$$
i.e. $\Delta$ is an additive map on $\mathcal{F}(H).$

Now let us  show that $\Delta$
 is  homogeneous. Indeed, for
 each $x\in B(H),$ and for
 $\lambda\in \mathbf{C}$ there exists a derivation $D_{x, \lambda x}$
 such that $\Delta(x)=D_{x, \lambda x}(x)$ and
 $\Delta(\lambda x)=D_{x, \lambda x}(\lambda x).$ Then
$$
\Delta(\lambda x)=D_{x, \lambda x}(\lambda x)=\lambda
D_{x, \lambda x}(x)=\lambda\Delta(x).
$$
Hence, $\Delta$ is homogenous and therefore it is a linear operator.

Finally, for each $x\in B(H),$ there exists a derivation $D_{x, x^2}$
 such that $\Delta(x)=D_{x, x^2}(x)$ and $\Delta(x^2)=D_{x, x^2}(x^2).$ Then
$$
\Delta(x^2)=D_{x, x^2}(x^2)=D_{x, x^2}(x)x+xD_{x, x^2}(x)=\Delta(x)x+x\Delta(x)
$$
 for all $x\in B(H).$
Therefore, the restriction $\Delta|_{\mathcal{F}(H)}$ of the operator $\Delta$
on $\mathcal{F}(H)$ is a linear Jordan derivation on
$\mathcal{F}(H)$ in the sense of \cite{Bre2}. In
\cite[Theorem 1]{Bre2} it is proved that any Jordan derivation on
a semi-prime algebra is a derivation. Since $\mathcal{F}(H)$ is semiprime,
therefore the linear operator $\Delta|_{\mathcal{F}(H)}$ is a derivation on
$\mathcal{F}(H).$

Now  by  \cite[Theorem 3.3]{Che}
the derivation  $\Delta|_{\mathcal{F}(H)} : \mathcal{F}(H)\rightarrow \mathcal{F}(H)$
 is spatial, i.e.
\begin{equation}\label{F}
\Delta(x)=ax-xa, \, x\in \mathcal{F}(H)
\end{equation}
for an appropriate $a\in B(H).$

Let us show that  $\Delta(x)=ax-xa$ for all  $x\in B(H).$
Consider the $2$-local derivation  $\Delta_0=\Delta-D_a.$
 Then from  the equality (\ref{F}) we obtain that  $\Delta_0|_{\mathcal{F}(H)}\equiv 0.$
Now by Lemma~\ref{B} it follows that $\Delta_0\equiv 0.$
 This means that  $\Delta=D_a.$
The proof is complete.
\end{proof}

For $2$-local automorphisms of $B(H)$ we have a similar result.

\begin{theorem}\label{LA}
Every $2$-local
automorphism  $\Theta: B(H)\rightarrow B(H)$
 is an
automorphism.
\end{theorem}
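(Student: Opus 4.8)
The plan is to mimic the structure of the proof of Theorem~\ref{Main}, replacing derivations by automorphisms and the trace pairing argument by the fact that an automorphism of $B(H)$ preserves the trace on $\mathcal{F}(H)$. First I would recall that every automorphism of $B(H)$ is spatial, i.e. implemented by an invertible (indeed, one may take unitary) operator $u\in B(H)$ via $x\mapsto uxu^{-1}$; this is the automorphism analogue of \cite[Corollary 3.4]{Che} that was used above. In particular every automorphism carries $\mathcal{F}(H)$ onto itself and satisfies $\mbox{tr}(\Phi(x))=\mbox{tr}(x)$ and $\mbox{tr}(\Phi(x)\Phi(y))=\mbox{tr}(xy)$ for $x,y\in\mathcal{F}(H)$, since $\mbox{tr}(uxu^{-1})=\mbox{tr}(x)$.

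Next I would establish additivity of $\Theta$ on $\mathcal{F}(H)$. Given $x,y\in\mathcal{F}(H)$, applying the $2$-local property to the pair $x,y$ yields a spatial automorphism $\Phi_{x,y}$ with $\Theta(x)=\Phi_{x,y}(x)$, $\Theta(y)=\Phi_{x,y}(y)$, whence $\mbox{tr}(\Theta(x)\Theta(y))=\mbox{tr}(\Phi_{x,y}(xy))=\mbox{tr}(xy)$ and also $\mbox{tr}(\Theta(x))=\mbox{tr}(x)$. Then for $u,v,w\in\mathcal{F}(H)$, taking the pair $(u+v,w)$ and expanding gives
$$
\mbox{tr}(\Theta(u+v)\Theta(w))=\mbox{tr}((u+v)w)=\mbox{tr}(uw)+\mbox{tr}(vw)=\mbox{tr}(\Theta(u)\Theta(w))+\mbox{tr}(\Theta(v)\Theta(w)).
$$
The catch is that $\Theta(w)$ ranges only over the image of $\Theta$, not all of $\mathcal{F}(H)$, so one cannot immediately set $w$ to make $\Theta(w)$ equal an arbitrary adjoint. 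To get around this I would first show $\Theta$ is surjective on $\mathcal{F}(H)$, or work with the operator $c=\Theta(u+v)-\Theta(u)-\Theta(v)$ and argue that $\mbox{tr}(c\,\Theta(w))=0$ for all $w$; if $\Theta$ maps onto $\mathcal{F}(H)$ this forces $c=0$ by Lemma~\ref{A}. Homogeneity is easy exactly as before: applying the $2$-local property to the pair $(x,\lambda x)$ gives $\Theta(\lambda x)=\Phi_{x,\lambda x}(\lambda x)=\lambda\Phi_{x,\lambda x}(x)=\lambda\Theta(x)$. Hence $\Theta$ is linear on $\mathcal{F}(H)$. Multiplicativity on $\mathcal{F}(H)$ follows by applying the $2$-local property to the pair $(x,x^2)$ to get $\Theta(x^2)=\Phi_{x,x^2}(x^2)=\Phi_{x,x^2}(x)^2=\Theta(x)^2$, i.e. $\Theta$ is a linear Jordan homomorphism on the semi-prime algebra $\mathcal{F}(H)$; by a Herstein-type theorem a Jordan homomorphism onto a prime (or suitably restricted semi-prime) algebra is either a homomorphism or an anti-homomorphism, and the anti-homomorphism case is excluded since $\mathcal{F}(H)$ contains $n\times n$ matrix subalgebras for all $n$. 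Thus $\Theta|_{\mathcal{F}(H)}$ is an injective algebra homomorphism; combined with surjectivity it is an automorphism of $\mathcal{F}(H)$, hence spatial: $\Theta(x)=uxu^{-1}$ for $x\in\mathcal{F}(H)$ and some invertible $u\in B(H)$.

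Finally I would pass from $\mathcal{F}(H)$ to all of $B(H)$ exactly as in the derivation case: form $\Theta_0=\Phi_u^{-1}\circ\Theta$ where $\Phi_u(x)=uxu^{-1}$, so that $\Theta_0$ is again a $2$-local automorphism with $\Theta_0|_{\mathcal{F}(H)}=\mbox{id}$, and then prove the automorphism analogue of Lemma~\ref{B}: a $2$-local automorphism that is the identity on $\mathcal{F}(H)$ must be the identity on $B(H)$. For that, fix $x\in B(H)$ and $y\in\mathcal{F}(H)$; there is a spatial automorphism $\Phi_{x,y}$, say $z\mapsto vzv^{-1}$, with $\Theta_0(x)=vxv^{-1}$ and $\Theta_0(y)=vyv^{-1}=y$ for all such $y$. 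The relation $vy=yv$ for every finite-rank $y$ forces $v$ to be a scalar, hence $\Theta_0(x)=x$. I expect the surjectivity of $\Theta$ on $\mathcal{F}(H)$ and the clean exclusion of the anti-homomorphism alternative to be the only genuinely delicate points; everything else is a transcription of the proof of Theorem~\ref{Main}.
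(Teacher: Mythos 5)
Your proposal contains a genuine error in the final step, precisely at the point where the real difficulty of the theorem lives. In your version of the analogue of Lemma~\ref{B} you fix $x\in B(H)$, take $y\in\mathcal{F}(H)$, obtain a spatial automorphism $z\mapsto vzv^{-1}$ agreeing with $\Theta_0$ at $x$ and at $y$, and then assert that ``the relation $vy=yv$ for every finite-rank $y$ forces $v$ to be a scalar.'' But $v=v_{x,y}$ depends on the pair $(x,y)$: for each individual $y$ you only learn that $v_{x,y}$ commutes with that one operator $y$, which says essentially nothing about $v_{x,y}$. There is no single $v$ commuting with all of $\mathcal{F}(H)$. This is exactly the trap that the paper's argument is built to avoid: instead of trying to pin down the implementing operator, the paper evaluates the automorphism $\Psi_{x,y}$ on the product $xy\in\mathcal{F}(H)$, uses multiplicativity to get $\Psi_{x,y}(xy)=\Phi(x)y$, and uses trace invariance of a spatial automorphism on $\mathcal{F}(H)$ to conclude $\mbox{tr}(\Phi(x)y)=\mbox{tr}(xy)$ for \emph{all} $y\in\mathcal{F}(H)$; since here $y$ is the input variable (not an image point), Lemma~\ref{A} applies directly and gives $\Phi(x)=x$. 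Your final step needs to be replaced by this trace argument.

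The middle of your proposal also diverges from the paper and leaves a second acknowledged hole. You try to rebuild linearity and multiplicativity of $\Theta$ on $\mathcal{F}(H)$ by hand, but your additivity argument produces only $\mbox{tr}\bigl(c\,\Theta(w)\bigr)=0$ where $c=\Theta(u+v)-\Theta(u)-\Theta(v)$, and, as you yourself note, this is useless unless $\Theta$ is surjective on $\mathcal{F}(H)$ --- which you never prove and which is not obvious (unlike in the derivation case, where $y$ pairs against $\Delta(x)$ directly and no surjectivity is needed). The paper sidesteps this entirely: it observes that $\Theta|_{\mathcal{F}(H)}$ is a $2$-local automorphism of $\mathcal{F}(H)$ and invokes the known result \cite[Theorem 2.5]{Liu} that $2$-local automorphisms of such standard operator algebras are automorphisms, then uses \cite[Theorem 3.1]{Che} to realize it spatially as $x\mapsto axa^{-1}$. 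If you want a self-contained argument on $\mathcal{F}(H)$ you would have to supply the surjectivity (or an alternative pairing), and also justify the exclusion of the anti-homomorphism case; as written, both of these plus the faulty final step leave the proof incomplete.
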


\begin{proof}
Since every automorphism on $B(H)$
 maps $\mathcal{F}(H)$ into itself it follows that
  the restriction $\Theta|_{\mathcal{F}(H)}$
is a  2-local automorphism on $\mathcal{F}(H).$
By \cite[Theorem 2.5]{Liu} $\Theta|_{\mathcal{F}(H)}$
is an  automorphism. Therefore by
 \cite[Theorem  3.1]{Che} there exists an invertible
element $a\in B(H)$
such that
$\Theta(x)=a x a^{-1}$
for all $x\in \mathcal{F}(H).$

Let us show that in fact  $\Theta(x)=axa^{-1}$ for all  $x\in B(H).$
Consider the $2$-local automorphism   $\Phi(x)=a^{-1}\Theta(x) a,\, x\in B(H).$
It is clear that   $\Phi(x)=x$ for all $x\in \mathcal{F}(H).$

Now for each $x\in B(H)$ and $y\in \mathcal{F}(H)$
there exists an automorphism  $\Psi_{x, y}$
 such that
$\Phi(x)=\Psi_{x, y}(x)$  and $\Phi(y)=\Psi_{x, y}(y).$
By \cite[Corolarry  3.2]{Che} there exists an invertible element $b\in B(H)$
such that
$$
b(xy)b^{-1}=\Psi_{x, y}(xy)=\Psi_{x,y}(x)\Psi_{x,y}(y)=\Phi(x)y.
$$
Thus
$$
\mbox{tr}(xy)=\mbox{tr}\left(b(xy)b^{-1}\right)=
\mbox{tr}\left(\Phi(x)y\right),
$$
i.e.
$
\mbox{tr}\left([\Phi(x)-x]y\right)=0
$
for all $y\in \mathcal{F}(H).$ By Lemma \ref{A}
we have that $\Phi(x)=x.$
 This means that  $\Theta(x)=axa^{-1}$ for all  $x\in B(H).$
The proof is complete.
\end{proof}

 \begin{remark} A similar result for  $2$-local automorphisms on $B(X),$
where $X$ is a locally convex space, has been obtained in
\cite[Corolarry 2.6]{Liu} under the additional assumption
 of continuity of the map with respect to the weak operator topology.
\end{remark}

\end{document}